\numberwithin{equation}{section}
\numberwithin{figure}{section}
\theoremstyle{plain}
\newtheorem{thm}[equation]{Theorem}
\newtheorem{lemma}[equation]{Lemma}
\newtheorem{prop}[equation]{Proposition}
\theoremstyle{definition}
\newcommand{\Z}{\ensuremath \mathbb{Z}}
\newcommand{\Mod}[1]{\ (\mathrm{mod}\ #1)}
\DeclareMathOperator{\division}{|}
\DeclareMathOperator{\doubledivision}{||}
\begin{document}

\title[Prime factors of $\Phi_3(x)$ of the same form]{Prime factors of $\Phi_3(x)$ of the same form}

\author{Cody S.\ Hansen}
\address{Department of Mathematics, Brigham Young University, Provo, UT 84602, USA}
\email{codyshansen@yahoo.com}

\author{Pace P.\ Nielsen}
\address{Department of Mathematics, Brigham Young University, Provo, UT 84602, USA}
\email{pace@math.byu.edu}

\keywords{cyclotomic polynomial, Eisenstein integers, triple threat}
\subjclass[2020]{Primary 11A51, Secondary 11D09, 11N32, 11R04}

\begin{abstract}
We parameterize solutions to the equality $\Phi_3(x)=\Phi_3(a_1)\Phi_3(a_2)\cdots\Phi_3(a_n)$ when each $\Phi_3(a_i)$ is prime.  Our focus is on the special cases when $n=2,3,4$, as this analysis simplifies and extends bounds on the total number of prime factors of an odd perfect number.
\end{abstract}

\maketitle

\section{Introduction}\label{Section:Introduction}

In this paper we study prime factors of the third cyclotomic polynomial,
\[
\Phi_3(x)=\frac{x^3-1}{x-1}=x^2+x+1,
\]
with $x\geq 1$ an integer.  Congruence conditions on the divisors are well-known and elementary.  Any prime divisor $p$ satisfies $p\equiv 0,1\Mod{3}$.  We have that
\[
3\division\Phi_3(x)\, \text{ if and only if }\, x\equiv 1\Mod{3},
\]
and in this case $3\doubledivision\Phi_3(x)$.  On the other hand, a prime $p\equiv 1\Mod{3}$ is a divisor of $\Phi_3(x)$ if and only if $x$ has order $3$ modulo $p$.  Similar statements hold for arbitrary cyclotomic polynomials, and such results often appear in introductory textbooks in number theory (for example, see Theorem 95 on page 166 of \cite{Nagell}).

These congruence conditions are independent of one another.  Therefore, for any choice of primes $p_1,p_2,\ldots,p_n$ congruent to $1\Mod{3}$, and any choice of exponents $e_1,e_2,\ldots, e_n\geq 1$, we can find some integer $x\geq 1$ such that $p_i^{e_i}\doubledivision\Phi_3(x)$, for each $1\leq i\leq n$.  If these primes and exponents are chosen randomly, any corresponding $x$ is expected to be very large, and so we also expect to find some prime divisor of $\Phi_3(x)$ not equal to any of the $p_i$'s.

On the other hand, according to a conjecture of Bunyakovsky, $\Phi_3(x)$ is prime for arbitrarily large values of $x$.  It is easy to check, numerically, that indeed this polynomial is prime quite often.  An asymptotic for the number of such primes (up to a given size) is provided by the Bateman-Horn conjecture.  At present, no univariate polynomial of degree at least two has been proven to have infinitely many prime values on integer inputs.

This article focuses on the situation when all of the prime factors of $\Phi_3(x)$ are of the same form, or in other words
\[
x^2+x+1=(a_1^2+a_1+1)(a_2^2+a_2+1)\cdots (a_n^2+a_n+1)
\]
where each $a_i^2+a_i+1$ is prime.  When $n\geq 2$ is fixed, our methods demonstrate that any such factorization arises from a finite number of parameterizations, where both $a_n$ and $x$ are expressed as rational functions evaluated in the variables $a_1,\ldots, a_{n-1}$.

Controlling the number and size of the prime divisors of $\Phi_3(x)$ of the same form is important in applications.  For instance, many papers on odd perfect numbers---especially those involving numerical searches---need such information; see \cite{FNO,Nielsen10,OchemRao}.  In particular, the recent work of Zelinsky \cite{Zelinsky} on this topic is what motivated the work in this paper.  He raised the question of whether it is possible to have what he termed a \emph{triple threat}, which is a solution to
\[
\Phi_3(x)=\Phi_3(a)\Phi_3(b)\Phi_3(c),
\]
with the seven quantities $x,a,b,c,\Phi_3(a),\Phi_3(b)$, and $\Phi_3(c)$ all prime.  Our work shows that this and other similar situations are not possible.  Consequently, our work simplifies the proofs---as well as extends the bounds achieved---in the paper \cite{Zelinsky}.

\section{Factorization in the Eisenstein integers}

The results of this paper depend heavily on knowledge of factorization in the ring of Eisenstein integers, $R=\Z[\zeta_3]$.  We will review the needed facts here.

First, rather than use the $\Z$-basis $\{1,\zeta_3\}$ for $R$, we will find it more convenient to use the basis $\{1,\zeta_6\}$.  Here, as usual, $\zeta_6=\frac{1+\sqrt{-3}}{2}$.  The norm with respect to this basis takes the form
\[
N(x+y\zeta_6)=(x+y\zeta_6)(x+y\overline{\zeta_6})=x^2+xy+y^2.
\]
In particular, $N(x+\zeta_6)=x^2+x+1$.  Thus, understanding the integer prime factorization of $x^2+x+1$ corresponds to understanding the prime factorization of $x+\zeta_6$ in $R$. Note that it is well-known that $R$ is a Euclidean domain, and hence a UFD, so we can freely speak about prime factorizations in $R$.

The unit group of $R$ is generated by $\zeta_6$.  As $N(\zeta_6)=1$, the norm of every unit of $R$ is $1$.  For any integer $x\geq 0$, the unit multiples of $x+\zeta_6$ and of $\overline{x+\zeta_6}$, when again written with respect to the basis $\{1,\zeta_6\}$, are as follows:
\[
\begin{array}{rclrcl}
\zeta_6^0\cdot (x+\zeta_6) & = & x+\zeta_6, & \qquad \zeta_6^0\cdot (\overline{x+\zeta_6}) & = & (x+1)-\zeta_6, \\[2pt]
\zeta_6^1\cdot (x+\zeta_6) & = & -1+(x+1)\zeta_6, & \qquad \zeta_6^1\cdot (\overline{x+\zeta_6}) & = & 1+x\zeta_6, \\[2pt]
\zeta_6^2\cdot (x+\zeta_6) & = & -(x+1)+x\zeta_6, & \qquad \zeta_6^2\cdot (\overline{x+\zeta_6}) & = & -x+(x+1)\zeta_6, \\[2pt]
\zeta_6^3\cdot (x+\zeta_6) & = & -x-\zeta_6, & \qquad \zeta_6^3\cdot (\overline{x+\zeta_6}) & = & -(x+1)+\zeta_6, \\[2pt]
\zeta_6^4\cdot (x+\zeta_6) & = & 1-(x+1)\zeta_6, & \qquad \zeta_6^4\cdot (\overline{x+\zeta_6}) & = & -1-x\zeta_6, \\[2pt]
\zeta_6^5\cdot (x+\zeta_6) & = & (x+1)-x\zeta_6, & \qquad \zeta_6^5\cdot (\overline{x+\zeta_6}) & = & x-(x+1)\zeta_6.
\end{array}
\]
If $x=0$, then the left column cycles through the units, and the right column repeats the left column but shifted two entries.  If $x=1$, then the left column cycles through the unit multiples of the ramified prime $1+\zeta_6$, and the right column again repeats the left column but shifted one entry.  If $x\geq 2$, there are no repetitions among the twelve entries.

For all twelve entries, if neither of the two coefficients (with respect to the basis $\{1,\zeta_6\}$) is $\pm 1$, then the two coefficients add to $\pm 1$.  This leads us to the following fundamental fact:

\begin{lemma}\label{Lemma:Fundamental}
Given an element $m+n\zeta_6\in R$, with $m,n\in \Z$, the following are equivalent:
\begin{itemize}
\item[\textup{(1)}] It holds that $m=\pm 1$, or $n=\pm 1$, or $m+n=\pm 1$.
\item[\textup{(2)}] There exists some \textup{(}unique\textup{)} integer $x\geq 0$, such that $m+n\zeta_6$ is a unit multiple of either $x+\zeta_6$ or its complex conjugate.
\end{itemize}
Moreover, if we know which of the six cases occurs in \textup{(1)}, and we know the signs of both $m$ and $n$, then we can describe $x$ as a linear polynomial in $m$ and $n$.
\end{lemma}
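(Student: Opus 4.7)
The plan is to prove the two implications separately. For the direction (2) $\Rightarrow$ (1), I would simply inspect the twelve-row table of unit multiples displayed in the excerpt and verify, line-by-line, that each expression of the form $m+n\zeta_6$ appearing there satisfies at least one of the three conditions ($m=\pm 1$, $n=\pm 1$, or $m+n=\pm 1$). For instance, the rows in which the $\zeta_6$-coefficient is $\pm 1$ directly give $n=\pm 1$; the rows in which the $1$-coefficient is $\pm 1$ give $m=\pm 1$; and the remaining rows (those of shape $\pm(x+1)\mp x\zeta_6$ and $\pm x\mp(x+1)\zeta_6$) satisfy $m+n=\pm 1$. This already corresponds to the sentence in the excerpt that says the two coefficients add to $\pm 1$ whenever neither equals $\pm 1$.

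For the direction (1) $\Rightarrow$ (2), I would do a case analysis on which of the six possibilities in (1) holds, with a subdivision inside each case based on the sign (nonnegative versus negative) of the coefficient that is not constrained to be $\pm 1$. For example, if $n=1$ and $m\geq 0$, then $m+n\zeta_6 = m+\zeta_6$ is already of the form $x+\zeta_6$ with $x=m$; if $n=1$ and $m\leq -1$, then $m+n\zeta_6 = -(x+1)+\zeta_6$ corresponds to row four of the right column with $x=-m-1$. Running through the remaining five cases in the same way matches each $m+n\zeta_6$ to exactly one of the twelve table entries, producing a candidate $x\geq 0$ that is in each subcase a linear polynomial in $m$ and $n$ (typically $x=m$, $x=-m-1$, $x=n$, $x=-n-1$, or a similar $\pm$-shift). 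This simultaneously establishes the existence portion of (2) and the final ``moreover'' clause of the lemma.

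Uniqueness of $x$ I would handle via the norm. Since multiplication by a unit and complex conjugation both preserve the norm, if $m+n\zeta_6$ is a unit multiple of $x+\zeta_6$ or its conjugate then
\[
m^2+mn+n^2 \;=\; N(m+n\zeta_6) \;=\; N(x+\zeta_6) \;=\; x^2+x+1,
\]
and because $x\mapsto x^2+x+1$ is strictly increasing on $x\geq 0$, the integer $x$ is determined by $m$ and $n$. This justifies the parenthetical ``(unique)'' in the statement.

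The main obstacle is purely organizational: the case analysis in the forward direction has six primary cases, each splitting into two or three subcases depending on sign, so the bookkeeping needs to be presented compactly (perhaps in a table parallel to the one already in the excerpt) to avoid being unreadable. Conceptually the argument is just a finite verification against the twelve-row table, with the norm calculation supplying uniqueness.
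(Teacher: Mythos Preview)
Your proposal is correct and matches the paper's own proof essentially line for line: the paper also reads $(2)\Rightarrow(1)$ off the twelve-entry table, handles $(1)\Rightarrow(2)$ by a sign-based case split (it illustrates with $m=1$ rather than your $n=1$, then says ``the other five cases work out similarly''), and proves uniqueness via the norm $x^2+x+1$ being strictly increasing on $x\geq 0$. The only difference is that your write-up is more explicit about the bookkeeping, which is fine.
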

\begin{proof}
We already observed, looking at the twelve entries above, that $(2)\Rightarrow (1)$.  Conversely, suppose that $(1)$ holds.  When $m=1$, then if $n\geq 0$ we can take $x=n$, while if $n<0$ we can take $x=-n-1$.  The other five cases work out similarly.  Finally, the uniqueness of $x$ comes from the fact that
\[
N(m+n\zeta_6)=N(x+\zeta_6)=x^2+x+1,
\]
which is a strictly increasing function of $x$ when $x\geq 0$.
\end{proof}

The behavior of prime factorization in $R$ is well-known.  For our work, we will only need the following basic facts.  Given an integer $a\geq 1$, if $a^2+a+1$ is prime in $\Z$, then it factors into two conjugate primes
\[
a^2+a+1=(a+\zeta_6)(a+\zeta_6^{-1})=\zeta_6^{-1}(a+\zeta_6)(1+a\zeta_6).
\]
The prime is repeated only in the case when $a=1$; indeed, $3$ is the only prime in $\Z$ that ramifies in $R$.

Suppose now that we have an equality
\[
x^2+x+1=\prod_{i=1}^n(a_i^2+a_i+1)
\]
for some integers $x,a_1,\ldots, a_n\geq 1$, where each $a_i^2+a_i+1$ is an integer prime.  Thus, we have
\[
\zeta_6^n(x+\zeta_6)(\overline{x+\zeta_6})=\prod_{i=1}^{n}(a_i+\zeta_6)(1+a_i \zeta_6),
\]
where the right side is a factorization into primes of $R$.  We then know that $x+\zeta_6$ is (up to a unit) a product of $n$ primes, where exactly one of the two prime factors of $a_i^2+a_i+1$ appears.  Of course, $\overline{x+\zeta_6}$ is the product of the remaining, conjugate prime factors.  There are finitely many possibilities for whether $a_i+\zeta_6$ or $1+a_i\zeta_6$ appears in the factorization of $x+\zeta_6$.  Running through all of these possibilities, and using Lemma \ref{Lemma:Fundamental}, we are able to completely characterize solutions to this equality.  In the next few sections we will fully demonstrate this process in the cases when $n=2$ and $n=3$, and sketch it when $n=4$.

\section{Two factors}

The solutions to $\Phi_3(x)=\Phi_3(a)\Phi_3(b)$, with both $\Phi_3(a)$ and $\Phi_3(b)$ prime, belong to a single infinite family, as described by the following theorem.

\begin{thm}\label{Thm:TwoFactors}
Let $x,a,b\geq 1$ be integers satisfying
\begin{equation}\label{Eq:2Factors}
x^2+x+1=(a^2+a+1)(b^2+b+1),
\end{equation}
where $a\leq b$ and both $a^2+a+1$ and $b^2+b+1$ are primes.  Then, up to reordering the variables, the solutions $(a,b,x)$ belong to the infinite family
\[
(a,a+1,(a+1)^2).
\]
\end{thm}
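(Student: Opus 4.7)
The plan is to translate the integer identity into a prime factorization statement in $R = \Z[\zeta_6]$ and then apply Lemma \ref{Lemma:Fundamental}. Rewriting \eqref{Eq:2Factors} via norms gives
\[
(x+\zeta_6)(\overline{x+\zeta_6}) \;=\; (a+\zeta_6)(\overline{a+\zeta_6})(b+\zeta_6)(\overline{b+\zeta_6}),
\]
and since each $\Phi_3(a), \Phi_3(b)$ is prime in $\Z$, the right-hand side is a prime factorization in $R$. By unique factorization, $x+\zeta_6$ is a unit multiple of a product of two of these four primes, with the complementary conjugates assembling to $\overline{x+\zeta_6}$. After replacing $x+\zeta_6$ by its conjugate if necessary, and after swapping the labels $a$ and $b$ if necessary, there are exactly two cases to consider: either $x+\zeta_6$ is a unit multiple of $(a+\zeta_6)(b+\zeta_6)$, or of $(a+\zeta_6)\overline{(b+\zeta_6)}$.

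Next I would expand each product with respect to the basis $\{1,\zeta_6\}$ using the relation $\zeta_6^2=\zeta_6-1$. This yields
\[
(a+\zeta_6)(b+\zeta_6)=(ab-1)+(a+b+1)\zeta_6
\]
in the first case, and
\[
(a+\zeta_6)\overline{(b+\zeta_6)}=(ab+a+1)+(b-a)\zeta_6
\]
in the second. Writing each as $m+n\zeta_6$, Lemma \ref{Lemma:Fundamental} asserts that this expression is a unit multiple of some $y+\zeta_6$ or $\overline{y+\zeta_6}$ (with $y\geq 0$) if and only if one of $m,n,m+n$ equals $\pm 1$. Moreover, the same lemma tells us how to read off $y$ as a linear polynomial in $m,n$ once the relevant case is identified.

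I would then run through the six equalities $m=\pm 1$, $n=\pm 1$, $m+n=\pm 1$ in each case, using $a,b\geq 1$ (and $a\leq b$) to eliminate all but a handful of subcases. In the first case almost everything is incompatible with $a,b\geq 1$, and the only surviving possibility forces $(a,b)=(1,2)$ and hence $x=4$. In the second case, the only compatible condition is $n=1$, which forces $b=a+1$ and $m=(a+1)^2$; comparing $(m,n)=((a+1)^2,1)$ against the table of twelve unit multiples in the excerpt identifies this element as $x+\zeta_6$ with $x=(a+1)^2$. Observing that the isolated solution $(1,2,4)$ from the first case is the $a=1$ instance of the infinite family $(a,a+1,(a+1)^2)$ then completes the proof.

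The analysis is essentially bookkeeping, and the only mild obstacle is keeping track of which unit recovers $y$ from $m+n\zeta_6$ when applying Lemma \ref{Lemma:Fundamental}. There is no arithmetic input needed beyond unique factorization in $R$ and the fact that primes $\Phi_3(a), \Phi_3(b)$ with $a,b\geq 2$ are unramified (which ensures the conjugate pairs really do give distinct primes and the case split above is exhaustive).
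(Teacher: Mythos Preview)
Your proof is correct and follows essentially the same route as the paper: factor in $\Z[\zeta_6]$, reduce by conjugation to two products, expand in the $\{1,\zeta_6\}$ basis, and invoke Lemma~\ref{Lemma:Fundamental} to force $(a,b)=(1,2)$ in one case and $b=a+1$, $x=(a+1)^2$ in the other. The only cosmetic difference is that the paper writes the second product as $(a+\zeta_6)(1+b\zeta_6)=(a-b)+(ab+b+1)\zeta_6$, a unit multiple of your $(a+\zeta_6)\overline{(b+\zeta_6)}=(ab+a+1)+(b-a)\zeta_6$; the ensuing case analysis is identical (and note that conjugation alone already reduces the four products to two, so your mention of swapping $a$ and $b$ is unnecessary).
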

\begin{proof}
We know that, up to a unit multiple, $x+\zeta_6$ is one of four quantities: $(a+\zeta_6)(b+\zeta_6)$, $(a+\zeta_6)(1+b\zeta_6)$, $(1+a\zeta_6)(b+\zeta_6)$, or $(1+a\zeta_6)(1+b\zeta_6)$.  Thus, after passing to the complex conjugate if necessary, we reduce to the first two cases.
\bigskip

\noindent{\bf Case 1}: Suppose $(a+\zeta_6)(b+\zeta_6)$ is equal to $x+\zeta_6$ or its conjugate, up to a unit.  We compute
\[
(a+\zeta_6)(b+\zeta_6)=(ab-1)+(a+b+1)\zeta_6.
\]
By Lemma \ref{Lemma:Fundamental}, we must have $ab-1=\pm 1$, or $a+b+1=\pm 1$, or $ab+a+b=\pm 1$.  Since $1\leq a\leq b$, the only option not immediately ruled out is when $ab-1=1$.  Thus, $ab=2$, and so $a=1$ and $b=2$.   (Note that Lemma \ref{Lemma:Fundamental} asserts that $x$ is always uniquely determined by $a$ and $b$, and in this case $x=4$.)  This is a special case of the general solution stated in the theorem.
\bigskip

\noindent{\bf Case 2}: Suppose $(a+\zeta_6)(1+b\zeta_6)$ is equal to $x+\zeta_6$ or its conjugate, up to a unit.  We compute
\[
(a+\zeta_6)(1+b\zeta_6)=(a-b)+(ab+b+1)\zeta_6.
\]
By Lemma \ref{Lemma:Fundamental}, we must have $a-b=\pm 1$, or $ab+b+1=\pm 1$, or $ab+a+1=\pm 1$.  Again, since $1\leq a\leq b$, the only option not immediately ruled out is when $a-b=-1$.  Thus, $b=a+1$, and one can directly check that $x=(a+1)^2$ is the unique solution to \eqref{Eq:2Factors}.
\end{proof}

Numerical searches suggest that there are indeed infinitely many cases where $\Phi_3(a)$ and $\Phi_3(a+1)$ are simultaneously prime.  This also would follow from standard conjectures in number theory, such as Schinzel's hypothesis H.

On the other hand, $x$ is never prime in any of these solutions.  Thus, as was already known, there are no ``double threats''.

\section{Three factors}

The solutions to $\Phi_3(x)=\Phi_3(a)\Phi_3(b)\Phi_3(c)$, with $\Phi_3(a)$, $\Phi_3(b)$, and $\Phi_3(c)$ simultaneously prime, are slightly more complicated, with three sporadic solutions and one infinite family of solutions.  These are described by the following theorem.

\begin{thm}\label{Thm:ThreeFactors}
Let $x,a,b,c\geq 1$ be integers satisfying
\begin{equation}\label{Eq:3Factors}
x^2+x+1=(a^2+a+1)(b^2+b+1)(c^2+c+1),
\end{equation}
where $a^2+a+1$, $b^2+b+1$, and $c^2+c+1$ are primes.  Then, up to reordering variables, the solutions $(a,b,c,x)$ are either
\begin{itemize}
\item one of the three sporadic solutions $(2,2,2,18)$, $(1,2,5,25)$, $(1,3,3,22)$, or
\item belong to the infinite family
\[
\left(a,b,\frac{ab}{a+b+1},\frac{ab}{a+b+1}(ab+a+b)+a+b\right)\ \text{ such that $a\leq b$}.
\]
\end{itemize}
\end{thm}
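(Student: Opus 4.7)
The plan is to mirror the proof of Theorem \ref{Thm:TwoFactors}, now with eight sub-cases corresponding to the choice of one prime from each conjugate pair $\{t+\zeta_6,\,1+t\zeta_6\}$ for $t\in\{a,b,c\}$. Complex conjugation (which simultaneously flips all three choices) together with permutation of the variables reduces me to two essentially distinct cases. In \emph{Case A} no factor is ``conjugated,'' so $x+\zeta_6$ is a unit multiple of $(a+\zeta_6)(b+\zeta_6)(c+\zeta_6)$; in \emph{Case B} exactly one factor is conjugated, which after relabeling I take to be the $c$-factor, so $x+\zeta_6$ is a unit multiple of $(a+\zeta_6)(b+\zeta_6)(1+c\zeta_6)$ with $a\leq b$. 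In each case I will expand into the form $m+n\zeta_6$, apply Lemma \ref{Lemma:Fundamental}, eliminate $n=\pm 1$ and $m+n=\pm 1$ via the trivial lower bounds $n\geq 6$ and $m+n\geq 3$, and then solve $m=\pm 1$.

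In Case A one computes $m=abc-a-b-c-1$ and $n=ab+ac+bc+a+b+c$. The equation $m=1$, equivalently $abc=a+b+c+2$, is easily enumerated under $a\leq b\leq c$ (the constraint forces $ab\leq 3+2/c$), and yields exactly the triples $(1,2,5)$, $(1,3,3)$, $(2,2,2)$; each has prime $\Phi_3$-values, and Lemma \ref{Lemma:Fundamental} gives $x=n$, producing the three sporadic solutions. The equation $m=-1$, equivalently $abc=a+b+c$, has the unique solution $(1,2,3)$; I verify directly that, after reordering as $(a,b,c)=(2,3,1)$, this coincides with the $c=1$ member of the infinite family described below.

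In Case B one computes $m=ab-c(a+b+1)-1$ and $n=abc+ac+bc+a+b+1$. The equation $m=-1$ reads exactly $c=ab/(a+b+1)$, giving the infinite family, and Lemma \ref{Lemma:Fundamental} yields $x=n-1=c(ab+a+b)+a+b$, matching the theorem. The remaining subcase $m=1$, equivalently $ab-c(a+b+1)=2$, is what I expect to be the main obstacle, as it has no obvious size bound. Here I plan to argue via congruences modulo $3$: since $3\division\Phi_3(t)$ whenever $t\equiv 1\Mod{3}$ (with $t>1$), primality of each $\Phi_3$ forces every variable either to equal $1$ or to satisfy $t\not\equiv 1\Mod{3}$. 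A short check over the four residue patterns in $\{0,2\}^2$ for $(a,b)\Mod{3}$ shows the equation is inconsistent modulo $3$ whenever $a,b,c\geq 2$, so at least one variable must equal $1$. The subcase $a=1$ (under $a\leq b$) collapses to $b-2=c(b+2)$, which has no positive integer solution. Hence $c=1$, the equation becomes $(a-1)(b-1)=4$, and $(a,b)\in\{(2,5),(3,3)\}$---each a reordering of a sporadic solution already obtained in Case A. This completes the enumeration.
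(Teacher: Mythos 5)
Your proposal is correct and follows essentially the same route as the paper: the same reduction to the two Eisenstein-integer cases, the same application of Lemma \ref{Lemma:Fundamental} with only the first coefficient able to be $\pm 1$, and the same modulo-$3$ argument to dispose of the subcase $ab-c(a+b+1)=2$. Your minor variations (direct enumeration via $ab\leq 3+2/c$ instead of solving for $c$, and reading $x$ off the lemma as $n$ or $n-1$ rather than from the norm equation) are sound and change nothing essential.
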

\begin{proof}
The prime factors of $x+\zeta_6$ are, up to units and up to conjugates and up to permuting variables, one of two cases: $(a+\zeta_6)(b+\zeta_6)(c+\zeta_6)$ or $(a+\zeta_6)(b+\zeta_6)(1+c\zeta_6)$.
\bigskip

\noindent{\bf Case 1}: We compute
\[
(a+\zeta_6)(b+\zeta_6)(c+\zeta_6)=(abc-a-b-c-1) + (ab+ac+bc+a+b+c)\zeta_6.
\]
Without loss of generality we may assume $1\leq a\leq b\leq c$.  Since $x^2+x+1$ is never divisible by $9$, we also know $b\geq 2$.  By Lemma \ref{Lemma:Fundamental}, we must have $abc-a-b-c-1=\pm 1$, or $ab+ac+bc+a+b+c=\pm 1$, or $abc+ab+ac+bc-1=\pm 1$.  The only options not immediately ruled out are $abc-a-b-c-1=\pm 1$.

{\bf Case 1a}: Suppose $abc-a-b-c-1=1$.  Solving for $c$, we get $c=\frac{a+b+2}{ab-1}$.  (Note that $ab-1\neq 0$, since $b\geq 2$.)  For $c$ to be an integer, we need $ab-1\leq a+b+2$.  Solving this inequality in terms of $b$, we get $b\leq \frac{a+3}{a-1}$ (which is valid only when $a\neq 1$).  As $a\leq b$, this means $a\leq \frac{a+3}{a-1}$, or in other words $a^2-2a-3\leq 0$.  This means that $a\in \{1,2,3\}$.

First, if $a=3$, then $b\leq \frac{a+3}{a-1}=3$, hence $b=3$.  But then $c=\frac{a+b+2}{ab-1}=1$, which contradicts the fact that $b\leq c$.

Next, if $a=2$, then $b\leq \frac{a+3}{a-1}=5$, so $b\in \{2,3,4,5\}$.  The only option where $c=\frac{a+b+2}{ab-1}$ is an integer at least as big as $b$ is when $b=2$ and $c=2$.  This is the first listed solution.

Finally, consider when $a=1$. Then $b\leq c=\frac{b+3}{b-1}$.  Thus, from a computation above, we have $b\in \{2,3\}$. If $b=2$ then $c=5$, which is the second listed solution.  Finally, if $b=3$ then $c=3$, which is the third listed solution.

{\bf Case 1b}: Suppose $abc-a-b-c-1=-1$.  Solving for $c$, we get $c=\frac{a+b}{ab-1}$.  For this to be an integer, we need $ab-1\leq a+b$.  Solving in terms of $b$, we get $b\leq \frac{a+1}{a-1}$ (unless $a=1$).  As $a\leq b$, this yields $a\leq \frac{a+1}{a-1}$, or in other words $a^2-2a-1\leq 0$.  Thus $a\in \{1,2\}$.

First, if $a=2$, then $b\leq 3$, and so $b\in \{2,3\}$.  Neither option gives an integer value for $c$ that is at least as big as $b$.

Finally, if $a=1$, then $c=\frac{b+1}{b-1}$. But since $2\leq b\leq c$, this leads to $b=2$ and $c=3$.  This solution is part of the infinite family (after permuting the variables).
\bigskip

\noindent{\bf Case 2}: We compute
\[
(a+\zeta_6)(b+\zeta_6)(1+c\zeta_6)=(ab-ac-bc-c-1)+(abc+ac+bc+a+b+1)\zeta_6.
\]
We again apply Lemma \ref{Lemma:Fundamental}.  The right coefficient is not $\pm 1$, nor is the sum of the two coefficients $\pm 1$.  Thus, the only options are $ab-ac-bc-c-1=\pm 1$.

{\bf Case 2a}: Suppose $ab-ac-bc-c-1=1$.  Solving for $c$, we get $c=\frac{ab-2}{a+b+1}$.  First, note that if $a=1$ then $c=\frac{b-2}{b+2}$, which is never an integer.  Thus, we may assume $1<a$, and by symmetry $1<b$.  For $a^2+a+1$ to be prime, we then must have $a\equiv 0,2\pmod{3}$, and similarly $b\equiv 0,2\pmod{3}$.  We consider each of these possibilities in turn.

If both $a$ and $b$ are congruent to $0\pmod{3}$, then $c\equiv 1\pmod{3}$.  In that case, the only way for $c^2+c+1$ to be prime is if $c=1$.  Thus, $\frac{ab-2}{a+b+1}=1$.  Solving for $b$ we get $b=\frac{a+3}{a-1}$.  The only positive integer value of $a\equiv 0\pmod{3}$ that makes $b$ an integer is $a=3$.  Hence $b=3$, which again gives us the third sporadic solution.

Next, if both $a$ and $b$ are congruent to $2\pmod{3}$, then $c\equiv 1\pmod{3}$.  Thus, once again we get $c=1$ and $b=\frac{a+3}{a-1}$.  The only positive integer value of $a\equiv 2\pmod{3}$ that makes $b$ an integer is when $a=2$.  Hence $b=5$, and this is the second sporadic solution.

Finally, without loss of generality, suppose $a\equiv 0\pmod{3}$ and $b\equiv 2\pmod{3}$.  We find that $3$-adic valuation of $c$ is negative in this case, so it cannot be an integer.

{\bf Case 2b}: Suppose $ab-ac-bc-c-1=-1$.  Solving for $c$, we get $c=\frac{ab}{a+b+1}$.  Solving for $x$ using \eqref{Eq:3Factors} yields the infinite family.
\end{proof}

A computer algebra system (or some elbow grease) can quickly show that the infinite family does indeed satisfy \eqref{Eq:3Factors}. Numerical searches suggest that there are in fact infinitely many integers $a$ and $b$, where $c=\frac{ab}{a+b+1}$ is also an integer, and simultaneously each of $\Phi_3(a)$, $\Phi_3(b)$ and $\Phi_3(c)$ are prime in $\Z$.

It still remains to fulfil our promise from the introduction, in showing that there are no ``triple threats''.

\begin{prop}
It is impossible to have $\Phi_3(x)=\Phi_3(a)\Phi_3(b)\Phi_3(c)$, with each of the seven numbers $x$, $a$, $b$, $c$, $\Phi_3(a)$, $\Phi_3(b)$, and $\Phi_3(c)$ simultaneously prime.
\end{prop}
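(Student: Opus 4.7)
The plan is to invoke Theorem \ref{Thm:ThreeFactors} directly, which already enumerates every solution to $\Phi_3(x)=\Phi_3(a)\Phi_3(b)\Phi_3(c)$ with the three factor values prime. A triple threat corresponds to imposing the four additional primality conditions that $x,a,b,c$ all be prime. So the entire proof reduces to checking these conditions case by case against the complete list: the three sporadic tuples, and the infinite family
\[
\left(a,b,\tfrac{ab}{a+b+1},\tfrac{ab}{a+b+1}(ab+a+b)+a+b\right),\quad a\leq b.
\]

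First I would dispatch the sporadic solutions. For $(2,2,2,18)$ the value $x=18$ is not prime. For $(1,2,5,25)$ we have $x=25$ not prime, and in addition $a=1$ is not prime. For $(1,3,3,22)$ we have $x=22$ not prime, and again $a=1$ is not prime. Thus none of the sporadic cases can contribute a triple threat.

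The main (and really the only interesting) step is the infinite family. Writing $c=\frac{ab}{a+b+1}$ as the divisibility relation $c(a+b+1)=ab$, I would observe that $c$ divides the product $ab$. If $c$ is prime, then $c\mid a$ or $c\mid b$; combined with $a$ and $b$ being prime, this forces $c=a$ or $c=b$. However, substituting $c=a$ into $a+b+1=\frac{ab}{c}=b$ yields $a+1=0$, which is impossible for a positive integer $a$; the case $c=b$ is symmetric. Hence no member of the infinite family can have $a$, $b$, and $c$ all simultaneously prime, and in particular no such member can be a triple threat.

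I do not anticipate a real obstacle: essentially all the heavy lifting was done in Theorem \ref{Thm:ThreeFactors}, and the additional primality constraints on $a,b,c$ are strong enough that the one-line divisibility argument $c\mid ab$ immediately collapses the infinite family. The only thing to be careful about is explicitly verifying each sporadic solution against all four primality conditions, rather than just against primality of $x$, so that the proof is self-contained.
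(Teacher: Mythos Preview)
Your proposal is correct and follows essentially the same approach as the paper: dispatch the sporadic solutions by noting $x$ is composite, then kill the infinite family by observing that $c\mid ab$ forces $c\in\{a,b\}$ if all three are prime. The only cosmetic difference is that the paper rules out $c=a$ and $c=b$ via the size bounds $c=\frac{a}{a+b+1}\,b<b$ and $c=\frac{b}{a+b+1}\,a<a$, whereas you substitute directly and derive $a+1=0$; both are one-line finishes of the same argument.
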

\begin{proof}
The three sporadic solutions cause no problems, since $x$ is not prime in those cases.  For the infinite family, let us show that $c=\frac{ab}{a+b+1}$ is never prime when $a$ and $b$ are.  We have $c=\frac{a}{a+b+1}b < b$.  Similarly, $c=\frac{b}{a+b+1}a<a$.  However, the only prime factors in the numerator of $c$ are $a$ and $b$.  Thus, $c$ cannot be prime.
\end{proof}

\section{Four (or more) factors}

A significant portion of the proofs of Theorems \ref{Thm:TwoFactors} and \ref{Thm:ThreeFactors} involve nothing more than case analysis and simple inequalities.  These can easily be handled by a modern computer algebra system, and the results can then, \emph{a fortiori}, be checked by hand.  For instance, if we are interested in the situation with four factors, then blindly using Mathematica's ``reduce'' routine gives us the following options:

\begin{prop}
Let $x,a,b,c,d\geq 1$ be integers satisfying
\begin{equation}\label{Eq:FourFactors}
x^2+x+1=(a^2+a+1)(b^2+b+1)(c^2+c+1)(d^2+d+1),
\end{equation}
where the four factors on the right side are each prime.  Then, up to reordering the variables \textup{(}and suppressing $x$\textup{)}, the solutions $(a,b,c,d)$ are either
\begin{itemize}
\item one of the two sporadic solutions $(2,2,2,17)$, $(2,2,3,6)$, or
\item belong to at least one of the four infinite families
\[
\begin{array}{ll}
\displaystyle\left(a,b,c,\frac{abc-a-b-c-2}{ab+ac+bc+a+b+c} \right) & \text{such that $a\leq b\leq c$,}\\[15pt]
\displaystyle\left(a,b,c,\frac{abc-a-b-c}{ab+ac+bc+a+b+c}\right) & \text{such that $a\leq b\leq c$,}\\[15pt]
\displaystyle\left(a,b,c,\frac{abc+ab+a+b-c-1}{ac+bc-ab+c+1}\right)& \text{such that $a\leq b$ and $a\leq c$, or}\\[15pt]
\displaystyle\left(a,b,c,\frac{abc+ab+a+b-c+1}{ac+bc-ab+c+1}\right)& \text{such that $a\leq b$ and $a\leq c$.}
\end{array}
\]
\end{itemize}
\end{prop}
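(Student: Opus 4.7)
The proof plan is to mirror the Eisenstein integer approach used in Theorems \ref{Thm:TwoFactors} and \ref{Thm:ThreeFactors}. By the discussion at the end of Section 2, $x+\zeta_6$ factors in $R$ as a unit times a product of four primes, each drawn from one of the conjugate pairs $\{a+\zeta_6,\,1+a\zeta_6\}$, $\{b+\zeta_6,\,1+b\zeta_6\}$, $\{c+\zeta_6,\,1+c\zeta_6\}$, and $\{d+\zeta_6,\,1+d\zeta_6\}$. Replacing $x+\zeta_6$ by its conjugate if necessary, and permuting the roles of the variables, there are three essentially distinct cases:
\begin{itemize}
\item Case 1: $(a+\zeta_6)(b+\zeta_6)(c+\zeta_6)(d+\zeta_6)$;
\item Case 2: $(a+\zeta_6)(b+\zeta_6)(c+\zeta_6)(1+d\zeta_6)$;
\item Case 3: $(a+\zeta_6)(b+\zeta_6)(1+c\zeta_6)(1+d\zeta_6)$.
\end{itemize}

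In each case, expand the product into the form $m+n\zeta_6$ using the relation $\zeta_6^2=\zeta_6-1$, obtaining explicit polynomial expressions for $m$ and $n$ in $a,b,c,d$. Lemma \ref{Lemma:Fundamental} then forces one of $m=\pm 1$, $n=\pm 1$, or $m+n=\pm 1$. Each such sub-case is linear in $d$, so one can solve for $d$ as a rational function of $a,b,c$.

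The positivity constraints $a,b,c,d\geq 1$, together with the fact that at most one of the four can equal $1$ (since $9\nmid\Phi_3(x)$), eliminate most of the eighteen resulting sub-cases outright. The survivors organize as follows: Case 2 with $m=\pm 1$ produces the first two parameterized families of the proposition; Case 3 with $m+n=\pm 1$ produces the last two; and Case 1 with $m=\pm 1$, after a short inequality argument bounding $a,b,c$ (the numerator of the expression $d=(q\pm 1)/p$, where $p$ and $q$ arise from the triple expansion in Theorem \ref{Thm:ThreeFactors}, must be at least $c$ times the denominator), leaves only a finite range of $(a,b,c)$ to inspect, and this yields exactly the two sporadic solutions $(2,2,2,17)$ and $(2,2,3,6)$.

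The main obstacle is simply the bookkeeping: eighteen sub-cases, each reducible to a short inequality or a finite check, are individually routine but tedious in aggregate, which is precisely why the authors flag that Mathematica's reduce routine handles it. One should also verify that none of the four families subsumes either sporadic solution (and that the sporadics satisfy the primality constraints), but both tasks reduce to elementary arithmetic with small primes.
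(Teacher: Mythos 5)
Your overall strategy is exactly the paper's: the authors give no hand proof here, explicitly delegating the case analysis to Mathematica's \texttt{Reduce} after setting it up via the Eisenstein factorization and Lemma \ref{Lemma:Fundamental}, and your three cases and eighteen sign sub-cases are the right setup. However, your claimed organization of the surviving sub-cases is wrong in two places, and as written the proof would fail to account for all solutions. First, Case 2 with $m+n=\pm 1$ is \emph{not} eliminated by positivity: there $m+n=(abc-a-b-c-1)(1+d)+(ab+ac+bc+a+b+c)$, and the leading factor $abc-a-b-c-1$ is negative precisely when one of $a,b,c$ equals $1$ and the other two are $(2,2)$ or $(2,3)$; this sub-case produces the genuine solutions $(1,2,2,5)$, $(1,2,2,6)$, $(1,2,3,15)$, $(1,2,3,17)$. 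Second, Case 1 with $m=\pm 1$ does not yield ``exactly'' the two sporadic solutions: the finite check (with $a\leq b\leq c\leq d$ and $d=(q\pm1)/p\geq c$) also turns up tuples containing a $1$, such as $(1,2,5,24)$, $(1,2,6,14)$, $(1,2,6,15)$, $(1,3,3,21)$, as well as tuples like $(2,2,2,19)$ and $(1,2,8,9)$ that solve the Diophantine equation but are excluded only by the primality hypothesis (e.g.\ $\Phi_3(19)=381=3\cdot 127$), not by any inequality.

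The proposition is still true because every one of these extra solutions contains an entry equal to $1$, and when a variable equals $1$ the two conjugate primes $1+\zeta_6$ and $1+1\cdot\zeta_6$ coincide, so the same factorization of $x+\zeta_6$ also fits a Case 2 or Case 3 pattern after permuting which variable plays the role of $d$; concretely, $(1,2,5,24)$ is $(2,5,24,1)$ in your second family, $(1,2,2,5)$ is $(2,2,5,1)$ in the third, and so on. Your outline needs this extra matching step (or an explicit listing of the finitely many $1$-containing tuples, as the paper does after the proposition), together with the primality filter for tuples like $(2,2,2,19)$; without it, the claim that the survivors are precisely the two sporadics plus the four families is not established. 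The remaining eliminations you assert (Case 1 and Case 2 with $n=\pm1$, Case 1 with $m+n=\pm1$, Case 3 with $m=\pm1$ or $n=\pm1$) do go through by positivity once one uses that at most one variable equals $1$, so the repair is local, but it is a genuine gap in the accounting rather than mere bookkeeping.
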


The case when one of the entries of the quadruple $(a,b,c,d)$ equals $1$ is important for applications in \cite{Zelinsky}.  Mathematica tells us (and it is easy to verify) that there are only finitely such tuples, namely (up to rearranging the entries) the following eight:
\begin{eqnarray*}
(1,2,2,5),\ (1,2,2,6),\ (1,2,3,15),\ (1,2,3,17),\\
(1,2,5,24),\ (1,2,6,14),\ (1,2,6,15),\ (1,3,3,21).
\end{eqnarray*}
We note that the only tuple in this list without an even entry is $(1,3,3,21)$, and in that case $x=484$, which is even.

Numerical searches suggest that there exist infinitely many quadruples $(a,b,c,d)$ in each of the four infinite families such that $\Phi_3(a)$, $\Phi_3(b)$, $\Phi_3(c)$, and $\Phi_3(d)$ are simultaneously prime.  We might ask if there exists a ``quadruple threat'', where additionally $a$, $b$, $c$, $d$, and (the corresponding) $x$ are also prime.  The answer is yes; namely, $(2,3,3,5)$, which has the corresponding $x$ value of $191$.  This is the unique example when any of the primes is even.

The paper \cite{Zelinsky} is mainly concerned about \emph{odd} tuples.  Notice that if we make this additional restriction, then both the third family and the fourth family are disqualified; we see that $d$ must be even if $a$, $b$, and $c$ are odd, in those families.  The second family is also disqualified for the following reason.  First, we can check that there are only finitely many solutions where one of the entries is $3$, and none of those solutions pans out. This forces the congruence conditions $a,b,c\equiv 2\pmod{3}$, in order for $\Phi_3(a)$, $\Phi_3(b)$, and $\Phi_3(c)$ to be prime.  In that case, $d$ has a negative $3$-adic valuation, so it is not an integer.

Perhaps surprisingly, the first infinite family is not disqualified, and a directed search finds that taking $x$ to be
\[
91939084808732106267276347132638226863579171653778358025126018412980773335493
\]
we have the odd quadruple threat
\begin{eqnarray*}
\Phi_3(x) & = & \Phi_3(39640921169)\Phi_3(39640924811)\Phi_3(431466989439524477)\\
& &\Phi_3(135601684951723299939542158557248883821).
\end{eqnarray*}
(The primality of the nine quantities was verified using Mathematica's ``ProvablePrimeQ'' routine.)  This was the only odd quadruple threat we found, but we expect that there are infinitely many more examples, and possibly some of smaller size.

A brief computer search shows that if any number $x$ gives rise to an odd quadruple threat, then the smallest prime factor of $\Phi_3(x)$ is bigger than $10^{13}$; this bound could easily be improved with further computations.

\section*{Acknowledgements}

We thank Jeremy Rouse for comments that helped us refine our search for odd quadruple threats.  The first author is planning to apply these results to odd perfect numbers in a forthcoming work.

\bibliographystyle{amsplain}
\providecommand{\bysame}{\leavevmode\hbox to3em{\hrulefill}\thinspace}
\providecommand{\MR}{\relax\ifhmode\unskip\space\fi MR }
\providecommand{\MRhref}[2]{%
  \href{http://www.ams.org/mathscinet-getitem?mr=#1}{#2}
}
\providecommand{\href}[2]{#2}

\end{document}